\newcommand{\be}{\begin{equation*}}
\newcommand{\ee}{\end{equation*}}
\newcommand{\bea}{\begin{eqnarray*}}
\newcommand{\eea}{\end{eqnarray*}}
\newcommand{\pp}{\partial}
\newcommand{\weight}{e^{2s\varphi(x,t)}}
\newtheorem{thm}{Theorem}[section]
\newtheorem{lem}[thm]{Lemma}
\theoremstyle{corollary}
\theoremstyle{remark}
\newtheorem{rmk}[thm]{Remark}
\begin{document}

\title[]{Global Lipschitz Stability in Determining Coefficients of 
the Radiative Transport Equation}

\author{Manabu Machida$^1$ and Masahiro Yamamoto$^2$}
\address{$^1$Department of Mathematics, University of Michigan, 
Ann Arbor, MI 48109, USA\\
$^2$Department of Mathematical Sciences, The University of Tokyo,\\
3-8-1 Komaba, Meguro, Tokyo 153, Japan}
\eads{\mailto{mmachida@umich.edu} and \mailto{myama@ms.u-tokyo.ac.jp}}

\begin{abstract}
In this article, for the radiative transport equation, we study
inverse problems of determining a time independent
scattering coefficient or total attenuation by boundary data
on the complementary sub-boundary after making one time input of a pair of
a positive initial value and boundary data on a suitable sub-boundary.
The main results are Lipschitz stability estimates.
We can also prove the reverse inequality, which means that our
estimates for the inverse problems are the best possible.
The proof is based on a Carleman estimate with a linear weight function.
\end{abstract}

%\maketitle

\section{Radiative Transport Equation and Main Results}

We consider wave or particles propagating in a random medium.  
Let $\Omega$ be a bounded domain of $\mathbb{R}^n$, $n\ge 2$ 
with $C^1$-boundary $\partial\Omega$.  The scalar product in 
$\mathbb{R}^n$ is denoted by a dot $(\cdot)$.  
Let $\nabla = \nabla_x = \left(\frac{\pp}{\pp x_1}, ...,
\frac{\pp}{\pp x_n}\right)$.  We let $u(x,v,t)\in\mathbb{R}$ 
denote the angular density at time $t > 0$ and position
$x\in\mathbb{R}^n$ with velocity $v\in V$, where 
$V=\left\{v\in\mathbb{R}^n;\,0<v_0\le|v|\le v_1\right\}$, 
$0\notin\overline{V}$.

Let $\sigma_a(x,v)$ and $\sigma_s(x,v)$ denote the absorption 
and scattering coefficients, respectively.  
Note that $\sigma_a$ and $\sigma_s$ are positive measurable functions: 
$$
\sigma_a:\;\Omega\times V\rightarrow\mathbb{R}, 
\quad
\sigma_s:\;\Omega\times V\rightarrow\mathbb{R}.  \eqno{(1.1)}
$$
We introduce the total attenuation as $\sigma_t=\sigma_a+\sigma_s$.  
The following radiative transport equation, which is the linear 
Boltzmann equation, governs $u(x,v,t)$ for $x\in\Omega$, $v\in V$, 
$0 < t < T$,
$$
Pu :=
P_0u(x,v,t)+\sigma_t(x,v)u- \sigma_s(x,v)\int_V p(x,v,v')u(x,v',t)d{v'}=0,
\eqno{(1.2)}
$$
where
$$
P_0u := \pp_tu(x,v,t) + v\cdot\nabla u(x,v,t).  \eqno{(1.3)}
$$
The phase function $p(x,v,v')$ satisfies
$$
\int_V p(x,v,v')d{v'}=1\quad\mbox{for all}\;(x,v).  \eqno{(1.4)}
$$
Equation (1.2) describes transport in a random medium such as 
light in biological tissue \cite{Arridge99,Arridge09}, 
neutrons in a reactor \cite{Case67, Duderstadt-Martin}, and light in the 
interstellar medium \cite{Chandrasekhar60} and atmospheres \cite{Sobolev75}.  
We let $\nu(x)$ be the outward normal unit vector to $\partial\Omega$ at 
$x\in\partial\Omega$.  We define $\Gamma_{\pm}$ as
$$
\Gamma_+=\left\{(x,v)\in\partial\Omega\times V;\,
\nu(x)\cdot v>0\right\}, \quad
\Gamma_-=\left\{(x,v)\in\partial\Omega\times V;\,
\nu(x)\cdot v<0\right\}.  \eqno{(1.5)}
$$

We impose the following boundary conditions.
$$
u(x,v,0)=a(x,v),\quad x\in\Omega,\quad v\in V,
  \eqno{(1.6)}
$$
$$
u(x,v,t)= g(x,v,t),\quad 0<t<T,\quad (x,v)\in\Gamma_-.  \eqno{(1.7)}
$$

We consider inverse problems of determining $\sigma_t$ or $\sigma_s$ 
by boundary data $u(x,v,t)$, $(x,v) \in \Gamma_+$, $0 < t < T$ 
after setting up the initial value (1.6) and boundary value (1.7) once.  
Our inverse problem is motivated by optical tomography, 
in which we recover $\sigma_t$ and $\sigma_s$ 
from boundary measurements (e.g., \cite{Arridge99,Arridge09}).  
An incident laser beam $g(x,v,t)$ enters 
the sample through the boundary, and the outgoing light 
$u(x,v,t)$ is measured on the boundary.

We refer to works concerning inverse problems on the transport equation.  
Let us write the albedo operator as 
$\mathcal{A}[g]=u(x,v,t)$, $(x,v)\in\Gamma_+$, $0 < t < T$.  
Choulli and Stefanov \cite{Choulli96} proved the uniqueness of $\sigma_t$ 
and $\sigma_s$.  
%\eqno{(1.8)}
%by assuming that the initial value is zero.  
Stability in determining some coefficients among $\sigma_t$, $\sigma_s$, 
$p$ is proved by the angularly averaged albedo operator \cite{Bal09b} 
and by the full albedo operator \cite{Bal10}.  
For the inverse problems in \cite{Bal09b} and \cite{Bal10}, 
the input-output operation can be limited to the boundary and the 
initial value can be zero, but one has to make infinitely many measurements.  
For the stationary transport equation, the non-uniqueness in the coefficient
inverse problem with the albedo
operator was characterized by gauge equivalent pairs in \cite{Stefanov09},
and the Lipschitz stability for gauge equivalent classes was proved for
the time-independent radiative transport equation in \cite{McDowall10}.
See also review articles \cite{Bal09a, Stefanov03} for coefficient inverse 
problems for the radiative transport equation.

Klibanov and Pamyatnykh \cite{Klibanov08} proved the uniqueness of $\sigma_t$ 
by the boundary values of $u$.  The formulation in \cite{Klibanov08} is 
different from \cite{Bal09b}, \cite{Bal10}, \cite{Choulli96} and 
measures a single output on $\Gamma_+\times (0,T)$ 
after choosing initial value and boundary data on $\Gamma_-\times (0,T)$.  

In this article, we adopt the same formulation as in \cite{Klibanov08} 
and we consider the inverse problems of determining 
$\sigma_s$ or $\sigma_t$ by the boundary value on $\Gamma_+ \times (0,T)$ 
with a suitable single input of the initial value.  Our main results are 
Lipschitz stability estimates in determining $\sigma_s$ or $\sigma_t$.  
To the best knowledge of the authors, there are no publications on the 
Lipschitz stability with a single measurement data related to the 
initial/boundary value problem (1.2), (1.6) and (1.7). 
The key of our proof is that we need not any extension of the solution 
$u$ to $(-T,T)$, thanks to the Carleman estimate Lemma 3.2 below.
On the other hand,  \cite{Klibanov08} applies the extension of the solution 
$u$ to $(-T,T)$ and so requires extra conditions for unknown coefficients.

Bukhgeim and Klibanov \cite{BK} proposed a methodology for proving 
the uniqueness and the stability for coefficient inverse problems 
with a single 
measurement, on which \cite{Klibanov08} is based.  
Their method uses an $L^2$-weighted estimate called a Carleman 
estimate for solutions to the differential equation under consideration.  
The Carleman estimate dates back to Carleman \cite{Ca}. See 
H\"ormander \cite{Hormander63}, Isakov \cite{Isa2}, and 
Lavrent'ev, Romanov, and Shishat$\cdot$ski\u{i} \cite{Lavrentev86}.  
As for inverse problems by Carleman estimate, we refer for example to 
Imanuvilov and Yamamoto \cite{ImaY1}, \cite{ImaY2}, Isakov \cite{Isa1}, 
\cite{Isa3}, Klibanov \cite{Kl1}, \cite{Kl2}, Klibanov and Timonov \cite{KT}, 
and Yamamoto \cite{Y}.  
Moreover see Klibanov and Pamyatnykh \cite{Klibanov06} for 
the Carleman estimate for a transport equation and an application to 
the unique continuation, and Klibanov and Yamamoto \cite{KY} for the 
exact controllability for the transport equation.  
Prilepkov and Ivankov \cite{PI} discusses an inverse problem of 
determining a $t$-function in the case where $\sigma_t$ depends on $x,v,t$.  

Throughout this article, $H^m(\Omega)$ denotes usual Sobolev spaces.  
We set
$$
X = H^1(0,T;L^{\infty}(\Omega\times V))
\cap H^2(0,T;L^2(\Omega\times V)).
$$
For arbitrarily fixed constant $M>0$, we set
$$
\mathcal{U} = \{ u\in X;\thinspace
\Vert u\Vert_X + \Vert \nabla u\Vert_{H^1(0,T;L^2
(\Omega\times V))} \le M\}.  \eqno{(1.9)}
$$
Now we are ready to state our main results.
\\

\begin{thm}[Determination of $\sigma_t$]
\label{Theorem1.1}
Let $u^k=u(\sigma_t^k)(x,v,t)$, $k=1,2$ be solutions to 
the transport equation:
\begin{eqnarray*}
&& \pp_tu(x,v,t) + v\cdot \nabla u + \sigma_t^k(x,v)u
- \sigma_s(x,v)\int_V p(x,v,v')u(x,v',t) dv' = 0,\\
&& u(x,v,0) = a(x,v), \qquad x \in \Omega, \thinspace v\in V,
\thinspace k=1,2,\\
&& u = g \quad \mbox{on $\Gamma_- \times (0,T)$}.
\end{eqnarray*}
Let $u^k \in \mathcal{U}$ and 
$\Vert \sigma_t^k\Vert_{L^{\infty}(\Omega\times V)}$, 
$\Vert \sigma_s\Vert_{L^{\infty}(\Omega\times V)} \le M$.  
We assume that
$$
T > \frac{\max_{x\in\overline{\Omega}, v \in \overline{V}} (v\cdot x)
- \min_{x\in\overline{\Omega}, v \in \overline{V}} (v\cdot x)}
{\min_{v\in\overline{V}} \vert v\vert^2},
\eqno{(1.10)}
$$
and
$$
a(x,v) > 0, \quad (x,v) \in
\overline{\Omega \times V}.  \eqno{(1.11)}
$$
Then there exists a constant $C = C(M)> 0$ such that
$$
C^{-1}\left(\int^T_0\int_{\Gamma_+} (\nu(x)\cdot v)
\vert \pp_t(u^1-u^2)(x,v,t)\vert^2 dSdvdt\right)^{\frac{1}{2}}
\le \Vert \sigma_t^1 - \sigma_t^2\Vert_{L^2(\Omega\times V)}
$$
$$
\le C\left(\int^T_0 \int_{\Gamma_+} (\nu(x)\cdot v)
\vert \pp_t(u^1-u^2)(x,v,t)\vert^2 dSdvdt\right)^{\frac{1}{2}}.  \eqno{(1.12)}
$$
\end{thm}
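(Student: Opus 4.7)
The plan is to follow the Bukhgeim--Klibanov scheme: reduce the inverse problem to a Carleman-type bound on the time-derivative of $u^1-u^2$, then use hypothesis (1.11) to convert a weighted lower bound at $t=0$ into an $L^2$-control of $\sigma_t^1-\sigma_t^2$. Set $y=u^1-u^2$ and $f=\sigma_t^2-\sigma_t^1$. Subtracting the two transport equations gives
$$P_0 y + \sigma_t^1 y - \sigma_s\int_V p(x,v,v')\,y(x,v',t)\,dv' = f(x,v)\,u^2(x,v,t),$$
with $y(\cdot,\cdot,0)=0$ and $y|_{\Gamma_-\times(0,T)}=0$. Putting $z=\partial_t y$ and evaluating the $y$-equation at $t=0$, where $y$ and its spatial gradient vanish, yields the key identity
$$z(x,v,0) = f(x,v)\,a(x,v).$$
Since $a_0 := \min_{\overline{\Omega\times V}} a > 0$ by (1.11), recovering $f$ in $L^2(\Omega\times V)$ is equivalent (up to the factor $a_0$) to recovering $z|_{t=0}$. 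A time-derivative of the $y$-equation shows that $z$ itself satisfies the same transport equation with source $f\,\partial_t u^2$ and vanishing trace on $\Gamma_-\times(0,T)$.

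Next, I would apply the Carleman estimate of Lemma 3.2 to $z$ on $Q=\Omega\times V\times(0,T)$ to obtain, for $s\ge s_0$, the schematic bound
$$s\int_Q |z|^2 e^{2s\varphi}\,dx dv dt \le C\int_Q |P_0 z|^2 e^{2s\varphi}\,dx dv dt + C\int_0^T\!\!\int_{\Gamma_+}(\nu\cdot v)|z|^2 e^{2s\varphi}\,dS dv dt,$$
with a linear weight $\varphi(x,t)$; hypothesis (1.10) should be precisely the threshold on $T$ that permits $\varphi$ to be chosen with $P_0\varphi=\partial_t\varphi+v\cdot\nabla\varphi$ of definite sign uniformly in $v\in\overline V$ and with $\partial_t\varphi\le-\kappa<0$. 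The scattering integral and the $\sigma_t^1 z$ term are $L^\infty$-bounded perturbations of $P_0$ and get absorbed into the left after taking $s$ large. To extract $z|_{t=0}$, I integrate the Bukhgeim--Klibanov identity
$$|z(x,v,0)|^2 e^{2s\varphi(x,0)} = |z(x,v,T)|^2 e^{2s\varphi(x,T)} - \int_0^T \partial_t\bigl(|z|^2 e^{2s\varphi}\bigr)\,dt$$
over $\Omega\times V$. Young's inequality controls the right-hand side by $s^{-1}\int_Q|\partial_t z|^2 e^{2s\varphi} + Cs\int_Q |z|^2 e^{2s\varphi}$ plus the $t=T$ boundary term, the latter being small for large $s$ thanks to the strict decay of the weight.

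Feeding the Carleman estimate into the resulting inequality, using the pointwise bound $|P_0 z|\le C|z|+|f|\,|\partial_t u^2|$ together with $\|\partial_t u^2\|_{L^2(0,T;L^\infty(\Omega\times V))}\le CM$, the right-hand side reduces to $C(s,M)\int_Q |f|^2 e^{2s\varphi} + C\,I_{\Gamma_+}$, where $I_{\Gamma_+}$ is the boundary integral on the right of (1.12). The strict decay $\varphi(x,0)-\varphi(x,t)\ge\kappa t$ gives $\int_0^T e^{2s\varphi(x,t)}\,dt\le(2s\kappa)^{-1}e^{2s\varphi(x,0)}$, which lets me absorb the $|f|^2$ integral into $a_0^2\int_{\Omega\times V}|f|^2 e^{2s\varphi(x,0)}\,dx dv$ for $s$ large, yielding $\|\sigma_t^1-\sigma_t^2\|_{L^2(\Omega\times V)}^2 \le C I_{\Gamma_+}$, the right inequality in (1.12). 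The reverse (left) inequality is a direct trace/energy estimate for the transport equation solved by $z$: the $L^2(Q)$-norm of the forcing $f\partial_t u^2$ is controlled by $CM\|f\|_{L^2}$, and a standard energy identity bounds $\int_0^T\int_{\Gamma_+}(\nu\cdot v)|z|^2$ by $CM^2\|f\|_{L^2}^2$.

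The principal obstacle will be the Carleman step: constructing the linear weight $\varphi(x,t)$ satisfying simultaneously $\partial_t\varphi\le-\kappa<0$ and $P_0\varphi$ of definite sign uniformly over the compact velocity annulus $V$. Condition (1.10) is exactly the sharp threshold on $T$ for which such a weight exists (matching the time the slowest characteristic needs to cross the $v$-projection of $\Omega$), so the geometric content of the theorem is concentrated in this step. The nonlocal scattering term, though coupling different velocities, is tame as an $L^\infty$-bounded zero-order perturbation and does not pose a substantive difficulty.
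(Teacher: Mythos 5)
Your overall strategy (linearize to $y=u^1-u^2$ with source $f\,u^2$, pass to $z=\partial_t y$ so that $z(\cdot,\cdot,0)=f\,a$, apply a Carleman estimate with a linear weight, and use an energy/trace estimate for the left inequality) is the paper's strategy, and the weight you describe is exactly the paper's $\varphi=-\beta t+v\cdot x$ with $0<\beta<\min_{v\in\overline V}|v|^2$, for which $\partial_t\varphi+v\cdot\nabla\varphi=-\beta+|v|^2>0$ and $\partial_t\varphi=-\beta<0$. However, two steps as you have written them do not close. First, your extraction of $z|_{t=0}$ via the identity $|z(x,v,0)|^2e^{2s\varphi(x,0)}=|z(x,v,T)|^2e^{2s\varphi(x,T)}-\int_0^T\partial_t\bigl(|z|^2e^{2s\varphi}\bigr)\,dt$ followed by Young's inequality produces the term $s^{-1}\int_Q|\partial_t z|^2e^{2s\varphi}$, and the Carleman estimate for the first-order operator $P_0$ controls only $s^2\int_Q|z|^2e^{2s\varphi}$ --- no derivatives of $z$. (For second-order hyperbolic operators the Bukhgeim--Klibanov identity works because the Carleman estimate carries $s\int|\nabla_{t,x}z|^2e^{2s\varphi}$; here it does not.) The paper avoids this entirely: in the proof of Lemma 3.1 the integration by parts of the cross term $-2s\int_Q B(\partial_t w+v\cdot\nabla w)w$ already deposits $s\int_\Omega B|w(x,0)|^2dx$ on the left-hand side, so the $t=0$ trace comes for free from the Carleman estimate itself. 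Your step could be repaired by substituting the equation for $\partial_t z$ instead of invoking Young's inequality, but that essentially reproduces the paper's computation.

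Second, and more consequentially for the statement you are proving: you dismiss the $t=T$ boundary term as ``small for large $s$.'' Bounding it by $CM^2e^{2s\max_{x,v}\varphi(x,T)}$ and dividing through leaves an additive error $CM^2e^{-2s\mu}$ that is not proportional to $\Vert f\Vert^2$ and therefore cannot be absorbed into the left-hand side; optimizing over $s$ then yields only H\"older stability --- this is precisely the content of the paper's Remark 4.1. To obtain the claimed Lipschitz bound one must show that the residual supported near $t=T$ is itself bounded by $Ce^{2sr_0}\Vert f\Vert^2_{L^2(\Omega\times V)}$ (plus $\Gamma_-$ data, which vanishes here), which the paper achieves by introducing the cutoff $\chi$ and applying the energy estimate (2.3) to $\partial_t u$ in the interior of the cylinder; only then can this term be absorbed into $se^{2sr_1}\Vert f\Vert^2$ for large $s$. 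This second use of the energy estimate (not only for the reverse inequality) is the missing idea in your proposal. A minor point: (1.10) is not the threshold for the existence of a weight with the stated sign properties (such a weight exists for every $T$); it is what guarantees $\max_{x,v}\varphi(x,T)<\min_{x,v}\varphi(x,0)$, i.e.\ that the weight near $t=T$ lies strictly below its $t=0$ level so that the cutoff error can be dominated.
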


\begin{thm}[Determination of $\sigma_s$]
\label{Theorem1.2}
Let $u^k=u(\sigma_s^k)(x,v,t)$, $k=1,2$, be the solution 
to the transport equation:
\begin{eqnarray*}
&& \pp_tu(x,v,t) + v\cdot \nabla u + \sigma_t(x,v)u
- \sigma_s^k(x,v) \int_V p(x,v,v')u(x,v',t) dv' = 0,\\
&& u(x,v,0) = a(x,v), \qquad x \in \Omega, \thinspace v\in V, \\
&& u = g \qquad \mbox{on $\Gamma_- \times (0,T)$}, \quad
k=1,2.
\end{eqnarray*}
Let $u^k \in \mathcal{U}$ and $\Vert\sigma_t\Vert_{L^{\infty}
(\Omega\times V)}$, $\Vert \sigma_s^k\Vert_{L^{\infty}(\Omega\times V)}$, 
$k=1,2$.  We assume (1.10) and (1.11).  
Then there exists a constant $C = C(M)> 0$ such that
$$
C^{-1}\left(\int^T_0\int_{\Gamma_+} (\nu(x)\cdot v)
\vert \pp_t(u^1-u^2)(x,v,t)\vert^2 dSdvdt\right)^{\frac{1}{2}}
\le \Vert \sigma_s^1 - \sigma_s^2\Vert_{L^2(\Omega\times V)}
$$
$$
\le C\left(\int^T_0\int_{\Gamma_+} (\nu(x)\cdot v)
\vert \pp_t(u^1-u^2)(x,v,t)\vert^2 dSdvdt\right)^{\frac{1}{2}}.
\eqno{(1.13)}
$$
\end{thm}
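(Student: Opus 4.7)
The plan is to follow the Bukhgeim--Klibanov scheme, in parallel with the proof of Theorem 1.1 but with $\sigma_s$ in place of $\sigma_t$ as the unknown. Set $y := u^1 - u^2$ and $f(x,v) := \sigma_s^1(x,v) - \sigma_s^2(x,v)$. Subtracting the two transport equations and rearranging the scattering terms gives
\be
\pp_t y + v\cdot\nabla y + \sigma_t y - \sigma_s^1 \int_V p(x,v,v')\, y(x,v',t)\,dv' = f(x,v)\, R(x,v,t),
\ee
where $R(x,v,t) := \int_V p(x,v,v')\,u^2(x,v',t)\,dv'$, together with the homogeneous initial condition $y(\cdot,\cdot,0)=0$ and the homogeneous incoming condition $y=0$ on $\Gamma_-\times(0,T)$.

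Since $f$ does not depend on $t$, I differentiate this equation in $t$ and let $z := \pp_t y$, which satisfies the same transport equation with source $f\,\pp_t R$ and with $z=0$ on $\Gamma_-\times(0,T)$. Evaluating the equation for $y$ at $t=0$ and using $y(\cdot,\cdot,0)=0$ (which also forces $v\cdot\nabla y(\cdot,\cdot,0)=0$ and kills the scattering integral) produces the crucial identity
\be
z(x,v,0) = f(x,v)\int_V p(x,v,v')\, a(x,v')\,dv'.
\ee
Hypothesis (1.11) together with $\int_V p\,dv'=1$ and $p\ge 0$ makes the integral on the right bounded below by a positive constant $c_0$ on the compact set $\overline{\Omega\times V}$, so $|f(x,v)|^2 \le c_0^{-2}|z(x,v,0)|^2$ pointwise. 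This converts any $L^2$-bound on $z|_{t=0}$ into one on $f$.

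To obtain that bound, I apply Lemma 3.2 to $z$, with a linear weight $\varphi(x,t)$ that is strictly decreasing in $t$; condition (1.10) is exactly what is needed to make such a weight admissible on $[0,T]$ for the characteristic direction $(1,v)$. Schematically this gives, for all $s$ large,
\bea
&& s\int_0^T\!\!\int_{\Omega\times V}|z|^2 e^{2s\varphi}\,dxdv\,dt + \int_{\Omega\times V}|z(x,v,0)|^2 e^{2s\varphi(x,0)}\,dxdv\\
&& \qquad \le C\int_0^T\!\!\int_{\Omega\times V}|f\,\pp_t R|^2 e^{2s\varphi}\,dxdv\,dt + C\int_0^T\!\!\int_{\Gamma_+}(\nu\cdot v)|z|^2 e^{2s\varphi}\,dS\,dv\,dt.
\eea
The bound $\|\pp_t R\|_{L^\infty}\le CM$ (from $u^k\in\mathcal{U}$) reduces the volume source to $CM^2\int_0^T\!\!\int|f|^2 e^{2s\varphi}$. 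Because $\varphi$ is linear and decreasing in $t$, the $t$-integral $\int_0^T e^{2s\varphi(x,t)}\,dt$ supplies a factor of order $1/s$; combined with $|f|^2 \le c_0^{-2}|z(x,v,0)|^2$ this bounds the source by $(C/s)\int_{\Omega\times V}|z(x,v,0)|^2 e^{2s\varphi(x,0)}\,dxdv$. For $s$ sufficiently large this term is absorbed by the second term on the left, so that $\int_{\Omega\times V}|z(x,v,0)|^2 e^{2s\varphi(x,0)}\,dxdv \le C\int_0^T\!\!\int_{\Gamma_+}(\nu\cdot v)|z|^2 e^{2s\varphi}\,dS\,dv\,dt$. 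At a fixed such $s$ the weights $e^{2s\varphi}$ are uniformly bounded above and below, and dropping them yields the upper inequality in (1.13).

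The lower inequality in (1.13) is a forward well-posedness estimate. The function $z=\pp_t y$ solves a transport equation with initial datum $fR(\cdot,\cdot,0)$, zero incoming data, and source $f\,\pp_t R$, and the standard $L^2$-energy identity (multiplication by $z$, integration over $\Omega\times V\times(0,T)$, the divergence theorem, and a Gronwall argument to absorb the scattering integral) bounds $\int_0^T\!\!\int_{\Gamma_+}(\nu\cdot v)|z|^2\,dS\,dv\,dt$ by $C\|f\|_{L^2(\Omega\times V)}^2$ with $C=C(M)$. The main technical obstacle is the absorption step in the Carleman inequality: the pointwise control of $|f|$ is available only at $t=0$, so the $1/s$ factor produced by integrating the linear weight $\varphi$ in $t$ is exactly what lets the inequality close. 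This is precisely the feature of Lemma 3.2 that permits working directly on the one-sided interval $(0,T)$ without extending $u$ to $(-T,T)$.
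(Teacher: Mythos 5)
Your overall strategy is the paper's own: reduce to the linearized source problem with $y=u^1-u^2$, $f=\sigma_s^1-\sigma_s^2$, $R=\int_V p(x,v,v')u^2(x,v',t)\,dv'$, differentiate in $t$ to transfer $f$ into the initial value $z(\cdot,\cdot,0)=fR(\cdot,\cdot,0)$, run the Carleman estimate of Lemma 3.2 with the linear weight $\varphi=-\beta t+v\cdot x$ for the upper bound, and use the energy/Gronwall identity (Lemma 2.1) for the lower bound. The identification of $R(x,v,0)=\int_V p(x,v,v')a(x,v')\,dv'$ and the remark that its strict positivity needs $p\ge 0$ in addition to (1.11) and (1.4) is correct and in fact slightly more careful than the paper, which leaves this step to ``similarly derive Theorem 1.2.''

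There is, however, a genuine gap at the central step: you apply Lemma 3.2 directly to $z=\pp_t y$, but Lemma 3.2 (and Lemma 3.1 behind it) is stated and proved only for functions vanishing at $t=T$. This hypothesis is not cosmetic. In the integration by parts $-s\int_Q B\,\pp_t(w^2)\,dx\,dt = s\int_\Omega B|w(x,0)|^2\,dx - s\int_\Omega B|w(x,T)|^2\,dx$ with $B=-\beta+|v|^2>0$, the boundary term at $t=T$ enters with the wrong sign and cannot be discarded; since $z=\pp_t y$ has no reason to vanish at $t=T$, the estimate you invoke is not available for your $z$. The paper repairs this by introducing a cut-off $\chi$ equal to $1$ on $[0,T-2\delta]$ and $0$ near $T$, applying Lemma 3.2 to $\chi\,\pp_t y$, and then controlling the commutator term $(\pp_t\chi)\pp_t y$, which is supported where $\varphi<r_0$, against the left-hand side carrying the weight $e^{2sr_1}$ with $r_1>r_0$; the energy estimate (2.3) applied to $\pp_t y$ is what bounds that commutator term by $\Vert f\Vert^2$ plus boundary data. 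This is also where hypothesis (1.10) actually does its work: it guarantees $R-\beta T<r$, i.e.\ the weight near $t=T$ lies strictly below the weight at $t=0$, so the cut-off error is absorbed for large $s$. Your attribution of (1.10) to ``making the weight admissible'' misses this mechanism, and without the cut-off your absorption argument does not close. The rest of your absorption bookkeeping (the $1/s$ gain from $\int_0^T e^{-2s\beta t}\,dt$, equivalently the paper's $(s-CT)$ estimate) and the proof of the reverse inequality are sound.
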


In (1.12) and (1.13), the second inequalities show the Lipschitz 
stability for the inverse problems, 
while the first inequalities are related to 
the initial/boundary value problems in which we are required to find 
$\pp_t u$ on $\Gamma_+ \times (0,T)$ for given $a$ and 
$(\sigma_t^k, \sigma_s)$, $(\sigma_t, \sigma_s^k)$, $k=1,2$.  
We obtain both-sided estimates and so the estimates 
for the inverse problems are the best possible.

For the Lipschitz stability for the inverse problems, we need the 
positivity (1.11) up to the boundary $\pp (\Omega \times V)$ 
of the initial value.  Measurements must be set up so that 
this positivity is guaranteed.  
The posivitiy condition is restricting but can be achieved in practice 
for example as follows.  
Let us consider optical tomography of the human brain 
(cf. \cite{Franceschini06,Huppert09}).  
We use a continuous-wave near-infrared laser beam and modulate the light 
by using an optical device.  
Before being temporally varied, the time-independent light is applied to 
the head.  The light is then scattered in different directions in the brain, 
and comes out.  Thus, in this setup, we can consider that the initial 
angular density $a(x,v)$ in the head is 
positive in $\overline{\Omega\times V}$.

Moreover we have to assume (1.10), that is, the observation time $T$ 
should be sufficiently large.  
%compared to the size of the domain $\Omega$.  
This is a natural condition because the transport equation has a finite 
propagation speed, which can be seen by (1.3).

In order to prove Theorems 1.1 and 1.2, it is sufficient to prove the 
linearized inverse problem below.

\begin{thm}
\label{Theorem 1.3}
We consider
$$
\pp_tu + v\cdot \nabla u + \sigma_tu
- \sigma_s\int_V p(x,v,v')u(x,v',t) dv'
= f(x,v)R(x,v,t), \quad x\in \Omega,\thinspace
v\in V, \thinspace 0 < t < T,
$$
$$
u(x,v,0) = 0, \qquad x \in \Omega, \thinspace v\in V. \\
$$
We assume
$$
R, \pp_tR \in L^2(0,T; L^{\infty}(\Omega \times V)),
\quad \sigma_t, \sigma_s \in L^{\infty}(\Omega \times V),
$$
and
$$
\pp_tu \in H^1(\Omega \times V \times (0,T)).
$$
We further assume
$$
R(x,v,0) > 0, \qquad (x,v) \in \overline{\Omega \times V}
$$
and
$$
0 < \beta < \min_{v\in\overline{V}} \vert v\vert^2, \quad
T > \frac{\max_{x\in\overline{\Omega}, v \in \overline{V}} (v\cdot x)
- \min_{x\in\overline{\Omega}, v \in \overline{V}} (v\cdot x)}{\beta}.
\eqno{(1.14)}
$$
There exist constants $C>0$ and $T>0$ such that
$$
\Vert f\Vert_{L^2(\Omega\times V)}^2 \le C\int^T_0\int_{\pp\Omega}\int_V
\vert \pp_tu\vert^2 dvdSdt  \eqno{(1.16)}
$$
for all $f \in L^2(\Omega\times V)$.
\end{thm}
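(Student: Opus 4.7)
The plan is to follow the Bukhgeim--Klibanov scheme anchored on the Carleman estimate of Lemma 3.2. I first set $w := \pp_t u$. Since $\sigma_t$, $\sigma_s$, $p$ and $f$ are time-independent, differentiating the PDE in $t$ gives $Pw = f\,\pp_t R$, while evaluating the original equation at $t=0$ together with $u(\cdot,0)=0$ produces the key identity
\[
w(x,v,0) = f(x,v)\,R(x,v,0).
\]
By assumption $R(\cdot,0) \ge c_0 > 0$ on the compact set $\overline{\Omega\times V}$, so pointwise control of $|w(\cdot,0)|$ translates directly into pointwise control of $|f|$.

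Next I apply Lemma 3.2 to $w$ with the linear weight $\varphi(x,v,t) := v\cdot x - \beta t$. This weight satisfies $P_0\varphi = |v|^2 - \beta > 0$ on $\overline{V}$, the pseudoconvexity that drives the Carleman estimate. Moreover condition (1.14) is precisely $\sup_{\overline{\Omega\times V}}\varphi(\cdot,T) < \inf_{\overline{\Omega\times V}}\varphi(\cdot,0)$, which guarantees that any contribution at $t = T$ in the estimate is exponentially smaller (in $s$) than the $t=0$ contribution and can be dispatched. I expect Lemma 3.2 to deliver, for all sufficiently large $s$, an inequality of the schematic form
\[
s\int_0^T\!\!\int_{\Omega\times V}|w|^2 e^{2s\varphi} + s\int_{\Omega\times V}|w(\cdot,0)|^2 e^{2s\varphi(\cdot,0)} \le C\int_0^T\!\!\int_{\Omega\times V}|Pw|^2 e^{2s\varphi} + C\int_0^T\!\!\int_{\partial\Omega\times V}|\nu\cdot v|\,|w|^2 e^{2s\varphi}.
\]

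To close the argument I substitute $|Pw|^2 \le \Vert \pp_t R\Vert_{L^{\infty}}^2\,|f|^2$ on the right and $|w(\cdot,0)|^2 \ge c_0^2\,|f|^2$ on the left, then use the elementary bound $\int_0^T e^{2s\varphi(x,v,t)}\,dt \le (2s\beta)^{-1}e^{2s\varphi(x,v,0)}$ to convert the right-hand volume term into $(C/s)\int_{\Omega\times V}|f|^2 e^{2s\varphi(\cdot,0)}$. For $s$ sufficiently large the $s$-factor on the left dominates this $(C/s)$-factor, allowing absorption; dropping the $v$-dependent weight by its uniform positive lower bound then yields (1.16), with the constant $C$ absorbing the dependence on the fixed $s$.

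The main obstacle I expect is the non-local scattering operator $w \mapsto \sigma_s(x,v)\int_V p(x,v,v')w(x,v',t)\,dv'$ appearing in $P$. Because $\varphi$ depends on $v$, this operator mixes the weighted norm at velocity $v$ with values of $w$ at all other velocities, and a naive Schur-type bound produces a factor $e^{Cs}$ that cannot be absorbed by the linear-in-$s$ gain from pseudoconvexity. I therefore anticipate Lemma 3.2 itself to be formulated for the full scattering operator $P$ rather than merely for the principal part $P_0 = \pp_t + v\cdot\nabla$, so that the scattering term never has to be estimated separately; alternatively, a carefully crafted perturbation argument exploiting the uniform bound on $|v-v'|\,|x|$ over $\overline{V}\times\overline{V}\times\overline{\Omega}$ may be needed. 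Working this step out is where the bulk of the technical effort lies.
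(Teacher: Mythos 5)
Your overall route is the paper's: set $w=\pp_tu$, use $w(\cdot,\cdot,0)=fR(\cdot,\cdot,0)$ with $R(\cdot,\cdot,0)>0$, apply the Carleman estimate of Lemma 3.2 with the linear weight $\varphi=v\cdot x-\beta t$, and absorb the source term on the right into the $t=0$ term on the left for large $s$ using the weight comparison coming from (1.14). However, there is a genuine gap at the point where you ``apply Lemma 3.2 to $w$'': that lemma is only valid for functions vanishing at $t=T$, and $\pp_tu(\cdot,\cdot,T)$ does not vanish. The paper's device is to apply the lemma not to $\pp_tu$ but to $z=\chi(t)\pp_tu$, where $\chi$ is a cut-off equal to $1$ on $[0,T-2\delta]$ and $0$ near $t=T$, chosen (via (4.2)--(4.3)) so that $\pp_t\chi$ is supported where $\varphi<r_0<r_1<\varphi(\cdot,0)$. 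You acknowledge that a $t=T$ contribution would appear and claim it ``can be dispatched'' because its exponential weight is smaller; that comparison of weights is correct but not sufficient, because the quantity being weighted, $\int_{\Omega\times V}\vert\pp_tu\vert^2$ at times near $T$, is not data and is not a priori controlled by $\Vert f\Vert_{L^2}$.

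Closing this hole is precisely where the paper invokes the energy estimate (2.3) applied to $\pp_tu$ (the equation for $\pp_tu$ has source $f\,\pp_tR$ and zero initial data is not needed there), which gives $\int_{\Omega\times V}\vert\pp_tu(\cdot,\cdot,t)\vert^2\,dvdx\le C\Vert f\Vert^2_{L^2(\Omega\times V)}+C\int^T_0\int_{\Gamma_-}\vert\pp_tu\vert^2\,dvdSdt$ uniformly in $t$. Only then does the cut-off (or $t=T$) error become $Ce^{2sr_0}\Vert f\Vert^2+Ce^{2sr_0}\times(\mbox{boundary data})$, and the term $Ce^{2sr_0}\Vert f\Vert^2$ can be absorbed into $se^{2sr_1}\Vert f\Vert^2$ on the left because $r_1>r_0$. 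If instead one only uses an a priori bound $\Vert\pp_tu\Vert\le M$, the argument degrades to H\"older stability (this is exactly Remark 4.1 of the paper), so the missing energy estimate is not a cosmetic omission but the step that earns the Lipschitz constant in (1.16). Your concern about the $v$-dependence of the weight interacting with the non-local scattering term is well taken, but the paper resolves it inside Lemma 3.2 rather than in the proof of Theorem 1.3, so it does not need separate treatment here.
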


\begin{thm}
\label{Theorem 1.4}
If $u = 0$ on $\Gamma_- \times (0,T)$ in Theorem 1.3, 
then there exists a constant $C>0$ such that
$$
C^{-1}\left(\int^T_0\int_{\Gamma_+} (\nu\cdot v)
\vert \pp_tu\vert^2 dSdvdt\right)^{\frac{1}{2}}
\le \Vert f\Vert_{L^2(\Omega \times V)}
\le C\left(\int^T_0\int_{\Gamma_+} (\nu\cdot v)
\vert \pp_tu\vert^2 dSdvdt\right)^{\frac{1}{2}}
\eqno{(1.17)}
$$
for any $f \in L^2(\Omega \times V)$.  
This stability estimate is the best possible.
\end{thm}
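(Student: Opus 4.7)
The plan is to prove the two inequalities of (1.17) separately; their combination then gives the ``best possible'' statement.

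\emph{Right-hand (Lipschitz stability) inequality.} I would revisit the derivation of Theorem 1.3. The boundary integral in (1.16) arises from the divergence identity $\int_{\Omega}(v\cdot\nabla w)\,w\,dx = \frac{1}{2}\int_{\pp\Omega}(\nu\cdot v)|w|^2\,dS$ applied, inside the Carleman estimate of Lemma 3.2, to $w = \pp_t u$ weighted by $e^{2s\va}$. Retaining the factor $\nu\cdot v$ rather than bounding it by $|v|\le v_1$ yields the refinement
\be
\Vert f\Vert_{L^2(\Omega\times V)}^2 \le C\int_0^T\int_{\pp\Omega}\int_V(\nu\cdot v)|\pp_tu|^2\,dv\,dS\,dt.
\ee
The set $\{\nu\cdot v=0\}$ has measure zero on $\pp\Omega\times V$, and on $\Gamma_-$ the hypothesis $u=0$ forces $\pp_tu=0$, so only the $\Gamma_+$-contribution survives, which is the right inequality of (1.17).

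\emph{Left-hand (reverse) inequality.} I would use a direct energy estimate for $w:=\pp_t u$. Differentiating the transport equation in $t$ and evaluating at $t=0$ (using $u(\cdot,\cdot,0)=0$) shows that $w$ satisfies
\be
\pp_tw + v\cdot\nabla w + \sigma_tw - \sigma_s\int_V p(x,v,v')w(x,v',t)\,dv' = f(x,v)\pp_tR(x,v,t),
\ee
with $w|_{\Gamma_-\times(0,T)}=0$ and initial value $w(x,v,0) = f(x,v)R(x,v,0)$. Multiplying by $w$, integrating over $\Omega\times V$, and applying the same divergence identity produce
\be
\frac12\frac{d}{dt}\Vert w\Vert^2_{L^2(\Omega\times V)} + \frac12\int_{\Gamma_+}(\nu\cdot v)|w|^2\,dv\,dS = \int_{\Omega\times V}\bigl(-\sigma_tw^2+\sigma_s wKw + wf\pp_t R\bigr)\,dv\,dx,
\ee
where $K$ is the scattering operator and the $\Gamma_-$-boundary term drops because $w=0$ there. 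Boundedness of $\sigma_t,\sigma_s$ and of $K$ on $L^2$, together with Young's inequality, would bound the right-hand side by $C\Vert w\Vert^2+C\Vert \pp_tR(\cdot,\cdot,t)\Vert^2_{L^\infty}\Vert f\Vert^2$. Integrating over $(0,T)$, applying Gronwall's lemma, and using $\Vert w(\cdot,\cdot,0)\Vert\le\Vert R(\cdot,\cdot,0)\Vert_{L^\infty}\Vert f\Vert$ together with $\pp_tR\in L^2(0,T;L^\infty)$ would then yield $\int_0^T\int_{\Gamma_+}(\nu\cdot v)|\pp_tu|^2\,dv\,dS\,dt \le C\Vert f\Vert^2_{L^2(\Omega\times V)}$, which is the left inequality of (1.17).

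\emph{Main obstacle.} The only genuinely delicate point will be certifying that the Carleman argument behind Lemma 3.2 and Theorem 1.3 actually produces the $(\nu\cdot v)$-weighted boundary term (rather than the coarser unweighted one appearing in (1.16)). This amounts to replaying that argument while preserving the natural weight supplied by the divergence identity rather than bounding it away. Once this refined version of Theorem 1.3 is in hand, the reverse direction is a routine energy estimate modulo the explicit identification of the initial value $w(\cdot,\cdot,0)=fR(\cdot,\cdot,0)$ from the equation evaluated at $t=0$.
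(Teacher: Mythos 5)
Your proposal is correct and follows essentially the same route as the paper: the upper bound on $\Vert f\Vert_{L^2(\Omega\times V)}$ comes from the Carleman-based Theorem 1.3 with the $(\nu\cdot v)$ weight retained (Lemma 3.1 already produces the weighted $\Gamma_+$ boundary term, so the refinement you describe is exactly what the paper implicitly uses when passing from (1.16) to (1.17)), and your reverse bound is the energy/Gronwall argument that is precisely the paper's Lemma 2.1 applied to $\pp_t u$, with the initial value $\pp_t u(\cdot,\cdot,0)=fR(\cdot,\cdot,0)$ identified in the same way. The ``delicate point'' you single out is real --- the paper glosses over the discrepancy between the unweighted integral in (1.16) and the weighted one in (1.17) --- but your resolution coincides with what the paper's own lemmas deliver.
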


In fact, for the proof of Theorem 1.1, setting 
$u = u^1-u^2$, $f = \sigma_t^1 - \sigma_t^2$ and 
$R = - u^2$, we have the above linearized inverse problem.  
By the regularity assumption of $u^1, u^2$, we can apply Theorem 1.4 
to obtain the conclusion (1.12).  We can similarly derive 
Theorem 1.2 from Theorem 1.4.

\begin{rmk}
\label{Remark1.5}
The estimate (1.16) implies the Lipschitz stability 
$$\Vert \sigma_t^1 - \sigma_t^2\Vert_{L^2(\Omega\times V)}
\le C\left(\int^T_0 \int_{\pp\Omega}\int_V
\vert \pp_t(u^1-u^2)\vert^2 dvdSdt\right)^{\frac{1}{2}},$$
or
$$\Vert \sigma_s^1 - \sigma_s^2\Vert_{L^2(\Omega\times V)}
\le C\left(\int^T_0 \int_{\pp\Omega}\int_V
\vert \pp_t(u^1-u^2)\vert^2 dvdSdt\right)^{\frac{1}{2}}.$$
We will see below that (1.16) is obtained without assuming $u=0$ on 
$\Gamma_-\times(0,T)$, or without using 
the boundary function $g(x,v,t)$.
\end{rmk}

The article is composed of 4 sections.  In section 2, we prove (1.17).  
In section 3, we prove a key Carleman estimate 
and in section 4, we complete the proof of Theorem 1.3.

\section{Proof of Theorem 1.4}

Henceforth $C>0$ denotes generic constants which 
are independent of $f$.

\begin{lem}
\label{Lemma2.1}
Under the assumptions used in Theorem 1.3, 
there exists a constant $C>0$ such that
$$
\int^T_0\int_{\Gamma_+} \vert \pp_tu\vert^2 dvdSdt
\le C\Vert f\Vert^2_{L^2(\Omega\times V)}
+ C\int^T_0\int_{\Gamma_-} \vert \pp_tu\vert^2 dvdSdt.  \eqno{(2.1)}
$$
\end{lem}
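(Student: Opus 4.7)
The plan is a classical $L^2$ energy estimate applied to the time derivative $w:=\partial_t u$. Differentiating the equation of Theorem 1.3 in $t$, $w$ satisfies the same first-order transport/scattering equation as $u$ with source $f(x,v)\,\partial_t R(x,v,t)$ in place of $fR$; evaluating the original PDE at $t=0$ together with $u(x,v,0)=0$ gives the initial condition $w(x,v,0)=f(x,v)R(x,v,0)$, so $\|w(\cdot,\cdot,0)\|_{L^2(\Omega\times V)}\le C\|f\|_{L^2(\Omega\times V)}$ by the $L^\infty$-boundedness of $R(\cdot,\cdot,0)$.

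I would then multiply the equation for $w$ by $w$ and integrate over $\Omega\times V\times(0,T)$. The transport term yields, via $\int_\Omega(v\cdot\nabla w)w\,dx=\frac{1}{2}\int_{\partial\Omega}(\nu\cdot v)w^2\,dS$ and the splitting $\partial\Omega\times V=\Gamma_+\cup\Gamma_-$, the identity
\[
\frac{1}{2}\|w(T)\|^2 + \frac{1}{2}\int_0^T\!\!\int_{\Gamma_+}(\nu\cdot v)w^2\,dSdvdt = \frac{1}{2}\|w(0)\|^2 + \frac{1}{2}\int_0^T\!\!\int_{\Gamma_-}|\nu\cdot v|w^2\,dSdvdt + I_1 + I_2,
\]
where $I_1=-\int\!\!\int\sigma_t w^2+\int\!\!\int\sigma_s w\,K[w]$ collects the damping and scattering contributions (with $K[w]=\int_V p(x,v,v')w(x,v',t)dv'$) and $I_2=\int\!\!\int f\,\partial_t R\,w$. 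Boundedness of $\sigma_s,\sigma_t$ together with $\int_V p(x,v,v')dv'=1$ and Cauchy--Schwarz give $|I_1|\le C\|w\|^2_{L^2(Q)}$; Cauchy--Schwarz and $\partial_t R\in L^2(0,T;L^\infty)$ give $|I_2|\le C\|f\|_{L^2}\|w\|_{L^2(Q)}$. A Gronwall argument on $\|w(t)\|^2_{L^2(\Omega\times V)}$ then yields $\sup_t\|w(t)\|^2\le C\|f\|^2_{L^2}+C\int_0^T\!\!\int_{\Gamma_-}|\nu\cdot v|w^2\,dSdvdt$, which substituted back produces the weighted analogue of (2.1).

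The main obstacle is that the energy identity naturally carries the weight $(\nu\cdot v)$, while (2.1) is stated with the unweighted surface measure $dv\,dS$. On the $\Gamma_-$ side this is harmless: since $|\nu\cdot v|\le v_1$, the weight can be absorbed and $\int_{\Gamma_-}|\nu\cdot v|w^2$ is dominated by $v_1\int_{\Gamma_-}w^2$. On the $\Gamma_+$ side the weight degenerates on the grazing set $\nu\cdot v=0$ and cannot be dropped directly from the left-hand side; this is the delicate point. I would handle it by a complementary trace bound along characteristics: for each $(x,v,t)\in\Gamma_+\times(0,T)$, integrate the ODE $\dot{y}=v$ backwards in $s$ until the characteristic leaves the cylinder through either $\Gamma_-\times(0,T)$ or the initial face $\{t=0\}$, obtaining $w(x,v,t)$ as an entry datum (bounded by $|f|$ or by the $\Gamma_-$ trace of $w$) times a bounded attenuation factor plus an integrated source involving $K[w]$ and $f\partial_t R$; squaring and integrating directly with respect to $dv\,dS\,dt$ over $\Gamma_+\times(0,T)$, using the already-established $L^2$ bound on $w$, then closes the unweighted estimate (2.1).
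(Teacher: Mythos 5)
Your core argument is exactly the paper's: an $L^2$ energy identity for $w=\partial_t u$ (which solves the same equation with source $f\,\partial_tR$ and initial datum $fR(\cdot,\cdot,0)$, so $\Vert w(0)\Vert_{L^2}\le C\Vert f\Vert_{L^2}$), with the convective term producing the boundary flux $\int_{\partial\Omega}\int_V(\nu\cdot v)w^2$, the scattering term controlled by Cauchy--Schwarz and (1.4), and Gronwall closing the estimate. The paper writes this computation for $u$ rather than $\partial_t u$ and applies it to $\partial_tu$ later, but that is a cosmetic difference. What the energy method yields --- in the paper and in your proposal --- is the \emph{weighted} estimate, with $(\nu\cdot v)\,dv\,dS\,dt$ on $\Gamma_+$ and the $\Gamma_-$ term dominated by the unweighted one since $\vert\nu\cdot v\vert\le v_1$. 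The unweighted left-hand side in the statement of (2.1) is an imprecision of the paper: its own proof ends by reading off the $\Gamma_+$ flux with the weight $(\nu\cdot v)$, and the weighted version is exactly what Theorem 1.4 (estimate (1.17)) requires.

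You correctly identified the degeneracy at the grazing set as the delicate point, but the repair you sketch does not close. Integrating backwards along characteristics from $(x,v,t)\in\Gamma_+\times(0,T)$ and then integrating the squared entry datum with respect to the \emph{unweighted} measure $dv\,dS\,dt$ requires the change of variables $y=x-sv$, whose Jacobian is $dy=(\nu(x)\cdot v)\,ds\,dS(x)$; hence, e.g., the contribution of the initial datum becomes $\int_\Omega\vert f(y,v)\vert^2\vert R\vert^2(\nu\cdot v)^{-1}dy$, which is not controlled by $\Vert f\Vert_{L^2(\Omega\times V)}^2$. This reflects the general fact that traces of $L^2$ solutions of transport equations live naturally in $L^2(\Gamma_\pm,\vert\nu\cdot v\vert\,dv\,dS)$, not in the unweighted space, so the unweighted form of (2.1) should not be expected. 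The correct reading of the lemma is the weighted one, which your energy argument does establish and which suffices for everything downstream.
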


Theorem 1.4 is obtained from Lemma 2.1 and Theorem 1.3.  
If $u=0$ on $\Gamma_-\times(0,T)$, then we have a both-sided estimate
$$
C^{-1}\int^T_0\int_{\Gamma_+}\vert \pp_tu\vert^2 dSdt
\le \Vert f\Vert_{L^2(\Omega\times V)}^2 \le C\int^T_0\int_{\Gamma_+}
\vert \pp_tu\vert^2 dSdt.
$$
This proves Theorem 1.4.
Thus the rest part of this article is devoted to the proofs of
Lemma 2.1 and Theorem 1.3.

\begin{proof}[Proof of Lemma 2.1.]
Multiplying $\pp_tu + v\cdot\nabla u + \sigma_tu
- \sigma_s\int_V pu dv' = fR$ by 
$2u$ and integrating over $\Omega\times V$, we have
\begin{eqnarray*}
&& \pp_t\int_{\Omega}\int_V \vert u(x,v,t)\vert^2 dvdx
+ \int_{\Omega}\int_V v\cdot\nabla(\vert u\vert^2) dvdx
+ 2\int_{\Omega}\int_V \sigma_tu^2 dvdx\\
-&& 2\int_{\Omega}\int_V \sigma_s(x,v)\left(\int_V p(x,v',v)u(x,v',t)dv'
\right)u(x,v,t) dvdx  = 2\int_{\Omega}\int_V fRu dvdx.
\end{eqnarray*}
By setting $E(t) = \int_{\Omega}\int_V \vert u(x,v,t)\vert^2 dvdx$, 
we obtain
\begin{eqnarray*}
&& E'(t) = - \int_{\pp\Omega}\int_V (v\cdot \nu) \vert u\vert^2 dvdS
- 2\int_{\Omega}\int_V \sigma_tu^2 dvdx \\
+ &&2\int_{\Omega}\int_V \sigma_s(x,v)\left(
\int_V p(x,v',v)u(x,v',t)dv'\right) u(x,v,t) dvdx
+ 2\int_{\Omega}\int_V fRu dvdx.
\end{eqnarray*}
Therefore, noting that $2\int_{\Omega}\int_V \vert fRu\vert dvdx \le
\int_{\Omega}\int_V \vert f\vert^2\vert R\vert^2 dvdx
+ \int_{\Omega}\int_V \vert u\vert^2 dvdx$, 
we have
\begin{eqnarray*}
\fl
E(t) - E(0) = -\int^t_0\left( \int_{\Gamma_+} + \int_{\Gamma_-}\right)
(v\cdot \nu) \vert u\vert^2 dvdS dt
- 2\int^t_0\int_{\Omega}\int_V \sigma_tu^2 dvdxdt
\\
+  2\int^t_0\int_{\Omega}\int_V \sigma_s(x,v)\left(
\int_V p(x,v',v)u(x,v',t)dv'
\right)u(x,v,t) dvdxdt
\\
+ 2\int^t_0\int_{\Omega}\int_V fRu dvdxdt
\end{eqnarray*}
$$
\le -\int^t_0 \int_{\Gamma_-}(v\cdot \nu) \vert u\vert^2 dvdS dt
+ C\int^t_0 E(\eta) d\eta + C\Vert f\Vert^2_{L^2(\Omega\times V)}
\eqno{(2.2)}
$$
for $0 \le t \le T$.  
Here by the Cauchy-Schwarz inequality, we used also
\begin{eqnarray*}
&& \left\vert \int^t_0\int_{\Omega}\int_V
\sigma_s(x,v)\left( \int_V \vert p(x,v,v')u(x,v',t)\vert dv'\right)
u(x,v,t) dvdxdt\right\vert\\
\le&& C\int^t_0 \int_{\Omega}
\left(\int_V \left(\int_V \vert u(x,v',t)\vert dv'
\right) \vert u(x,v,t)\vert dv\right) dxdt\\
\le && C\int^t_0\int_{\Omega}
\left(\left(\int_V \vert u(x,v',t)\vert^2 dv' \right)^{\frac{1}{2}}
\vert V\vert^{\frac{1}{2}}\right)
\left(\left(\int_V \vert u(x,v,t)\vert^2 dv \right)^{\frac{1}{2}}
\vert V\vert^{\frac{1}{2}}\right) dxdt\\
=&& C\vert V\vert
\int^t_0\int_{\Omega}\int_V \vert u(x,v,t)\vert^2 dv dxdt.
\end{eqnarray*}
Hence
$$
E(t) \le E(0) + \int^t_0 \int_{\Gamma_-} \vert u\vert^2 dvdS dt
+ C\Vert f\Vert^2_{L^2(\Omega\times V)} + C\int^t_0 E(\eta) d\eta, \quad
0 \le t \le T.
$$
The Gronwall inequality implies
$$
E(t) \le C\left( E(0) + \int^T_0 \int_{\Gamma_-} \vert u\vert^2 dvdS dt
+ \Vert f\Vert^2_{L^2(\Omega\times V)}\right), \quad 0 \le t \le T.
                                          \eqno{(2.3)}
$$
By (2.2), we have
\begin{eqnarray*}
&& \int^T_0 \int_{\Gamma_+} (v\cdot \nu) \vert u\vert^2 dvdS dt + E(t)\\
=&& -\int^T_0 \int_{\Gamma_-} (v\cdot \nu) \vert u\vert^2 dvdS dt
- 2\int^T_0 \int_{\Omega}\int_V \sigma_tu^2 dvdxdt
+ 2\int^T_0\int_{\Omega}\int_V fRu dvdxdt\\
+&& 2\int^T_0\int_{\Omega}\int_V \sigma_s(x,v)
\left(\int_V p(x,v',v)u(x,v',t)dv'
\right)u(x,v,t) dvdxdt\\
\le && -\int^T_0 \int_{\Gamma_-} (v\cdot \nu) \vert u\vert^2 dvdS dt
+ C\int^T_0 E(\eta) d\eta + C\Vert f\Vert^2_{L^2(\Omega\times V)}.
\end{eqnarray*}
Applying (2.3), we obtain (2.1).
\end{proof}

\section{Carleman estimate}

In this section, we prove a Carleman estimate for the proof of
Theorem 1.3.

We set
$$
Q = \Omega \times V.
$$
and
$$
Pu(x,v,t) = \pp_tu(x,v,t) + v\cdot\nabla u(x,v,t) + \sigma_tu(x,v)u(x,v,t),
\quad (x,t) \in Q, \thinspace v\in V.
$$
We set
$$
\varphi(x,t) = -\beta t + (v\cdot x)
$$
where $0 < \beta < \min_{v\in\overline{V}}\vert v\vert^2$ and
$$
B := \pp_t\varphi + (v\cdot \nabla\varphi) = -\beta + \vert v \vert^2 > 0.
$$

\begin{lem}
\label{Lemma3.1.}
There exist constants $s_0>0$ and $C>0$ such that
\begin{eqnarray*}
&& s\int_{\Omega}\int_V \vert u(x,v,0)\vert^2 e^{2s\varphi(x,0)} dvdx
+ s^2\int_Q \int_V \vert u(x,v,t)\vert^2e^{2s\varphi} dvdxdt\\
\le&& C\int_Q\int_V \vert Pu\vert^2 \weight dvdxdt
+ s\int^T_0\int_{\Gamma_+} (v\cdot \nu) \vert u\vert^2 \weight dvdSdt
\end{eqnarray*}
for all $s \ge s_0$ and $u \in H^1(\Omega\times V \times (0,T))$ satisfying 
$u(\cdot,\cdot,T) = 0$ in $\Omega\times V$.
\end{lem}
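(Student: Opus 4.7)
The plan is to conjugate the principal part $P_0 u = \pp_t u + v\cdot \nabla u$ with the weight $e^{s\varphi}$, reduce to an $L^2$ identity for $w = e^{s\varphi} u$, and then absorb the zero-order term $\sigma_t u$ as a lower-order perturbation. A direct computation gives
$$
e^{s\varphi} P_0 u \;=\; P_0 w \;-\; sB w,
$$
where, crucially, $B = \pp_t\varphi + v\cdot\nabla\varphi = -\beta + |v|^2$ depends only on $v$ and is strictly positive by the choice of $\beta$. The entire estimate will come from squaring this identity and exploiting that $B$ is independent of both $x$ and $t$.

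Write $L_s w := P_0 w - sBw$ and expand
$$
\int_0^T\!\int_\Omega\!\int_V |L_s w|^2 \, dv\, dx\, dt
\;=\; \Vert P_0 w\Vert^2 \;+\; s^2\!\int B^2 w^2 \;-\; 2s\!\int B\, w\, P_0 w.
$$
For the cross term I would use $2w\, P_0 w = \pp_t(w^2) + v\cdot\nabla(w^2)$ and integrate by parts in $t$ and in $x$. Because $\pp_t B = 0$ and $v\cdot\nabla B = 0$, no bulk remainders appear. The temporal boundary term at $t=T$ vanishes by the assumption $u(\cdot,\cdot,T)=0$, leaving only the positive contribution $+s\int_{\Omega\times V} B\, w(x,v,0)^2\, dv\, dx$; the spatial integration by parts yields $-s\int_0^T\!\int_{\pp\Omega}\!\int_V B(v\cdot\nu) w^2$, which splits into $\Gamma_+$ and $\Gamma_-$ parts. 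The $\Gamma_-$ piece has the good sign (since $v\cdot\nu<0$) and can be discarded from the left-hand side; the $\Gamma_+$ piece is moved to the right-hand side. Dropping $\Vert P_0 w\Vert^2$ as well and using $0 < B_{\min} \le B \le B_{\max}$, then translating back through $u = e^{-s\varphi} w$, I obtain a Carleman estimate of the desired form but with $P_0 u$ in place of $Pu$ on the right.

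To finish, I would reinstate the zero-order term: from $|P_0 u|^2 \le 2|Pu|^2 + 2\Vert\sigma_t\Vert_{L^\infty}^2 |u|^2$, a spurious term $C\Vert\sigma_t\Vert_{L^\infty}^2 \int |u|^2 e^{2s\varphi}$ appears on the right, which is absorbed into $s^2\int |u|^2 e^{2s\varphi}$ on the left as soon as $s \ge s_0$ with $s_0$ chosen so that $s_0^2 B_{\min}^2 / 2 \ge C\Vert \sigma_t\Vert_{L^\infty}^2$. This yields Lemma 3.1. The one genuinely delicate point is the cross-term integration by parts, and it works cleanly precisely because the linear weight $\varphi(x,t) = -\beta t + v\cdot x$ makes $B$ constant in $(x,t)$: any $x$- or $t$-dependence of $B$ would force differentiating $\sigma_t$, which is only $L^\infty$, and spoil the estimate.
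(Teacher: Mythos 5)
Your proposal is correct and follows essentially the same route as the paper: conjugate with $w=e^{s\varphi}u$, expand $\Vert P_0w-sBw\Vert^2$, integrate the cross term by parts using that $B=-\beta+|v|^2$ is constant in $(x,t)$ and that $u(\cdot,\cdot,T)=0$, discard the good $\Gamma_-$ boundary term, and absorb $\sigma_t u$ by taking $s$ large. The only cosmetic difference is that the paper dispenses with $\sigma_t$ at the outset rather than reinstating it at the end.
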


\begin{proof}
By $\sigma_t \in L^{\infty}(\Omega\times V)$, by choosing $s>0$ large, 
it suffices to prove the inequality for $\sigma_t=0$.  For any fixed 
$v \in V$, we 
set $w(x,t)=e^{s\varphi(x,t)}u(x,v,t)$ and $(L w)(x,t) = e^{s\varphi(x,t)}P
(e^{-s\varphi}w)$.  
Then
$$
Lw = \{\pp_tw + (v\cdot \nabla w)\} - sBw.
$$
Hence by $u(\cdot,\cdot, T) = 0$, we have
\begin{eqnarray*}
&& \int_Q \vert Pu\vert^2 \weight dxdt
= \int_Q \vert Lw\vert^2 dxdt\\
=&& \int_Q \vert \pp_tw + (v\cdot \nabla w)\vert^2 dxdt
+ \int_Q \vert sB\vert^2 w^2 dxdt
- 2s\int_Q B(\pp_tw + (v\cdot\nabla w)) dxdt\\
\ge &&-2s \int_Q B(\pp_tw + v\cdot \nabla w)w dxdt
+ s^2\int_Q B^2w^2 dxdt\\
= && -s \int_Q (B\pp_t(w^2) + Bv\cdot\nabla(w^2))dxdt
+ s^2\int_Q B^2w^2 dxdt\\
=&& s \int_{\Omega}  B\vert w(x,0)\vert^2 dx
- s\int^T_0\int_{\pp\Omega} B(\nu\cdot v) w^2 dSdt
+ s^2\int_Q B^2w^2 dxdt \\
\ge&& s \int_{\Omega}  B\vert w(x,0)\vert^2 dx
- s\int^T_0\int_{\pp\Omega\cap\{(v\cdot\nu)\ge 0\}} B(\nu\cdot v) w^2 dSdt
+ s^2\int_Q B^2w^2 dxdt.
\end{eqnarray*}
Substituting $w=e^{s\varphi}u$ and noting $B>0$, we have
\begin{eqnarray*}
\fl
\int_{\Omega} s\vert u(x,v,0)\vert^2 e^{2s\varphi(x,0)} dx
+ s^2\int_Q \vert u(x,v,t)\vert^2 e^{2s\varphi} dxdt
- s\int^T_0 \int_{\pp\Omega\cap\{(v\cdot\nu)\ge 0\}}
\vert u(x,v,t)\vert^2 e^{2s\varphi} dSdt
\\
\le
C\int_Q \vert Pu(x,v,t)\vert^2 e^{2s\varphi(x,t)} dxdt,
\end{eqnarray*}
where $C$ is a constant.  Integrating over $V$, we complete the proof.
\end{proof}

\begin{lem}
\label{Lemma3.2}
There exist constants $s_0>0$ and $C>0$ such that
\begin{eqnarray*}
&& s\int_{\Omega}\int_V \vert u(x,v,0)\vert^2 e^{2s\varphi(x,0)} dvdx
+ s^2\int_Q \int_V \vert u(x,v,t)\vert^2e^{2s\varphi} dvdxdt\\
\le&& C\int_Q\int_V \left\vert
\pp_tu + v\cdot \nabla u + \sigma_tu
- \sigma_s\int_V pu dv' \right\vert^2 \weight dvdxdt\\
+ && C\int^T_0\int_{\Gamma_+} \vert u\vert^2 \weight dvdSdt
\end{eqnarray*}
for all $s \ge s_0$ and $u \in H^1(\Omega\times V \times (0,T))$ satisfying 
$u(\cdot,\cdot,T) = 0$ in $\Omega\times V$.
\end{lem}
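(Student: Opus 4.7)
The plan is to deduce Lemma 3.2 directly from Lemma 3.1 by treating the scattering integral $\sigma_s \int_V p(x,v,v') u(x,v',t)\,dv'$ as a lower-order perturbation that can be absorbed into the left-hand side once $s$ is taken sufficiently large. Denote by $Pu = \partial_t u + v\cdot\nabla u + \sigma_t u$ the operator of Lemma 3.1 (treating $v$ as a parameter), and set
\[
F u := \partial_t u + v\cdot\nabla u + \sigma_t u - \sigma_s\int_V p(x,v,v') u(x,v',t)\,dv',
\]
so that $Pu = Fu + \sigma_s\int_V p\,u\,dv'$. Applying Lemma 3.1 to $u$ and using the elementary inequality $|Pu|^2 \le 2|Fu|^2 + 2|\sigma_s\int_V p u\,dv'|^2$ gives
\begin{eqnarray*}
\fl s\int_\Omega\!\!\int_V |u(x,v,0)|^2 e^{2s\varphi(x,0)}\,dv\,dx
+ s^2\int_Q\!\!\int_V |u|^2 e^{2s\varphi}\,dv\,dx\,dt
\\
\le C\int_Q\!\!\int_V |Fu|^2 e^{2s\varphi}\,dv\,dx\,dt
+ C\int_Q\!\!\int_V \Bigl|\sigma_s\!\int_V p\,u\,dv'\Bigr|^2 e^{2s\varphi}\,dv\,dx\,dt
+ s\int_0^T\!\!\int_{\Gamma_+}(v\cdot\nu)|u|^2 e^{2s\varphi}\,dv\,dS\,dt.
\end{eqnarray*}

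Next I would estimate the scattering term in essentially the same way as in the proof of Lemma 2.1. Using Cauchy--Schwarz together with the boundedness of $|V|$, $\sigma_s$, and $p$ (or, alternatively, using (1.4) to write $p\,dv'$ as a probability measure and applying Jensen's inequality, followed by Fubini), I obtain pointwise in $(x,t)$
\[
\int_V \Bigl|\sigma_s(x,v)\!\int_V p(x,v,v') u(x,v',t)\,dv'\Bigr|^2 dv
\;\le\; C_0 \int_V |u(x,v,t)|^2\,dv.
\]
Crucially, the Carleman weight $e^{2s\varphi(x,t)}$ does not depend on $v$, so this inequality may be multiplied by $e^{2s\varphi(x,t)}$ and integrated over $Q$ to yield
\[
\int_Q\!\!\int_V \Bigl|\sigma_s\!\int_V p\,u\,dv'\Bigr|^2 e^{2s\varphi}\,dv\,dx\,dt
\;\le\; C_0 \int_Q\!\!\int_V |u|^2 e^{2s\varphi}\,dv\,dx\,dt.
\]

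Plugging this into the previous estimate, the scattering contribution becomes $C C_0 \int_Q\int_V |u|^2 e^{2s\varphi}\,dv\,dx\,dt$, which is dominated by half of the term $s^2 \int_Q\int_V |u|^2 e^{2s\varphi}\,dv\,dx\,dt$ on the left-hand side as soon as $s^2 \ge 2 C C_0$; choose $s_0$ accordingly and subtract. Finally, for the boundary term on $\Gamma_+$ I use $0 \le v\cdot\nu \le v_1$ to replace $s(v\cdot\nu)$ by a constant times $s$ (absorbing the $s$ into the generic constant $C$ allowed to depend on $s_0$), obtaining the desired $C\int_0^T\int_{\Gamma_+}|u|^2 e^{2s\varphi}\,dv\,dS\,dt$. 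The entire argument goes through for any $u\in H^1(\Omega\times V\times(0,T))$ with $u(\cdot,\cdot,T)=0$, since that is precisely the class where Lemma 3.1 applies.

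The only step that requires any real care is the $L^2$ control of the scattering integral: the integration variables $v$ and $v'$ must be kept straight (Lemma 2.1 uses exactly this bookkeeping), and the fact that the weight is $v$-independent is what allows the $v$-integration to commute with $e^{2s\varphi}$. Everything else is bookkeeping and choosing $s_0$ large enough, so I do not foresee a substantive obstacle.
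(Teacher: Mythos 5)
Your proposal follows essentially the same route as the paper: both treat the scattering integral as a zeroth-order perturbation, bound $\int_V\bigl|\sigma_s\int_V p\,u\,dv'\bigr|^2dv \le C\int_V|u|^2dv$ by Cauchy--Schwarz (using that the weight $e^{2s\varphi}$ is $v$-independent), and absorb the resulting $C\int_Q\int_V|u|^2e^{2s\varphi}$ into the $s^2$ term of Lemma 3.1 for $s$ large. The one wrinkle---your claim that the factor $s$ in the $\Gamma_+$ boundary term of Lemma 3.1 can be absorbed into a constant $C$ "depending on $s_0$"---is not literally valid for all $s\ge s_0$ with a fixed $C$, but this discrepancy is already present in the paper's own statement of Lemma 3.2 and is harmless in the application, where the boundary contribution is ultimately dominated by $e^{Cs}$ times the data.
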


\begin{proof}
Let $C$ denote a generic constant.  Note that, for 
$\sigma_s, \sigma_t \in L^2(\Omega\times V)$, 
\begin{eqnarray*}
\fl
\int_Q\int_V \left\vert \int_V
p(x,v,v') u(x,v',t) dv' \right\vert^2 e^{2s\varphi} dvdxdt
\le C\int_Q\int_V\left(\int_V \vert u(x,v',t)\vert^2 dv' \right)
\weight dvdxdt
\\
\le
C \int_Q\int_V \vert u(x,v,t)\vert^2 \weight dvdxdt.
\end{eqnarray*}
Therefore we have
$$
\left\vert\sigma_tu-\sigma_s\int_V pu dv'\right\vert^2\le
C\vert u\vert^2.
$$
for all $(x,v)\in Q$, $v\in V$.  Thus the lemma follows from Lemma 3.1.
\end{proof}

\section{Proof of Theorem 1.3}

Henceforth $C>0$ denotes generic constants which are independent of $s>0$.  
Let $\varphi(x,t) = -\beta t + (v \cdot x)$ for $(x,t) \in Q$.  
We set
$$
R = \max_{x\in\overline{\Omega}, v\in \overline{V}} (v\cdot x), \quad
r = \min_{x\in\overline{\Omega}, v\in \overline{V}} (v\cdot x).
$$
By the conditions on $\beta > 0$ and $T > 0$, we have
$$
R - \beta T < r.  \eqno{(4.1)}
$$
Then
$$
\varphi(x,T)\le R - \beta T < r\le\varphi(x,0),\quad 
(x,v)\in\overline{\Omega}\times\overline{V}.
$$
Therefore there exist $\delta>0$ and $r_0, r_1$ such that
$R - \beta T < r_0 < r_1 < r$,
$$
\varphi(x,t) > r_1, \quad (x,v) \in\overline{\Omega}\times\overline{V},
\thinspace 0 \le t\le \delta  \eqno{(4.2)}
$$
and
$$
\varphi(x,t) < r_0, \quad (x,v) \in\overline{\Omega}\times\overline{V}, 
\thinspace
T-2\delta \le t\le T.  \eqno{(4.3)}
$$
For applying Lemma 3.2, we need a cut-off function $\chi\in C^{\infty}_0
({\Bbb R})$ such that $0 \le \chi \le 1$ and
$$
\chi(t)  =
\left\{
\begin{array}{rl}
1, \qquad & 0 \le t \le T-2\delta,\\
0, \qquad & T-\delta\le t \le T.
\end{array}\right.
\eqno{(4.4)}
$$
We set
$$
z(x,v,t) = (\pp_tu(x,v,t))\chi(t).
$$
Then $z(x,v,T) = 0$ and
$$
Pz - \sigma_s\int_Vp(x,v,v')zdv' = \chi f(\pp_tR) + (\pp_t\chi)\pp_tu,
\quad (x,t) \in Q, \thinspace v \in V
$$
and
$$
z(x,v,0) = f(x,v)R(x,v,0), \qquad x\in\Omega, \thinspace v\in V.
$$
Applying Lemma 3.2 to $z$, we obtain
$$
s\int_{\Omega}\int_V \vert z(x,v,0)\vert e^{2s\varphi(x,0)} dvdx
\le C\int_Q\int_V \vert \chi f(\pp_tR)\vert^2 \weight dvdxdt  \eqno{(4.5)}
$$
$$
+ C\int_Q\int_V \vert (\pp_t\chi)\pp_tu\vert^2 \weight dvdxdt
+ Ce^{Cs}d^2.
$$
Here                                                                           
$$
d = \int^T_0\int_{\Gamma_+} \vert \pp_tu\vert^2 dvdSdt.
$$
Since $\pp_t\chi = 0$ for $0 \le t\le T-2\delta$ or $T-\delta
\le t \le T$, by (4.3) we have
$$
\int_Q \int_V \vert (\pp_t\chi)\pp_tu\vert^2 \weight dvdxdt
=
\int^{T-\delta}_{T-2\delta} \int_{\Omega}\int_V
\vert (\pp_t\chi)\pp_tu\vert^2 \weight dvdxdt
$$
$$
\le
Ce^{2sr_0}\int^{T-\delta}_{T-2\delta} \int_{\Omega}\int_V
\vert \pp_tu\vert^2 dvdxdt.  \eqno{(4.6)}
$$
Applying (2.3) to $\pp_tu$, we obtain
$$
\int_{\Omega}\int_V \vert \pp_tu(x,v,t)\vert^2 dvdx
\le C\Vert fR\Vert^2_{L^2(\Omega\times V \times (0,T))}
+ C\int^T_0\int_{\Gamma_-} \vert \pp_tu\vert^2 dvdSdt
$$
for $0 \le t \le T$.  Therefore we can estimate (4.6) as
$$
\int_Q \int_V \vert (\pp_t\chi)\pp_tu\vert^2 \weight dvdxdt
\le Ce^{2sr_0}\Vert f\Vert^2_{L^2(\Omega\times V)}
+ Ce^{2sr_0}\int^T_0\int_{\Gamma_-} \vert \pp_tu\vert^2 dvdSdt
$$

Moreover $R(x,v,0) \ne 0$ and $z(x,v, 0) = f(x,v)R(x,v,0)$ 
for $(x,v) \in \overline{\Omega}\times\overline{V}$, we have
$$
\int_{\Omega}\int_V  \vert z(x,v,0)\vert e^{2s\varphi(x,0)} dvdx
\ge C\int_{\Omega}\int_V \vert f(x,v)\vert^2 e^{2s\varphi(x,0)} dvdx.
$$
Therefore (4.5) yields
\begin{eqnarray*}
\fl
s\int_{\Omega}\int_V \vert f(x,v)\vert^2 e^{2s\varphi(x,0)} dvdx
&\le&
C\int_Q \int_V \vert f(x,v)\vert^2 e^{2s\varphi(x,t)} dv dxdt
\\
&+&
Ce^{2sr_0}\Vert f\Vert^2_{L^2(\Omega\times V)}
+ Ce^{2sr_0}\int^T_0\int_{\Gamma_-} \vert \pp_tu\vert^2 dvdSdt
\\
&+&
Ce^{Cs}d^2.
\end{eqnarray*}
Since $\varphi(x,t) \le \varphi(x,0)$ for $(x,t) \in Q$, we have
\begin{eqnarray*}
\fl
s\int_{\Omega}\int_V \vert f(x,v)\vert^2 e^{2s\varphi(x,0)} dvdx
&\le&
C\int^T_0\int_{\Omega}\int_V \vert f(x,v)\vert^2 e^{2s\varphi(x,0)} dvdxdt
\\
&+&
Ce^{2sr_0}\Vert f\Vert^2_{L^2(\Omega\times V)}
+ Ce^{2sr_0}\int^T_0\int_{\Gamma_-} \vert \pp_tu\vert^2 dvdSdt
\\
&+&
Ce^{Cs}d^2,
\end{eqnarray*}
that is,
\begin{eqnarray*}
\fl
(s-CT)\int_{\Omega}\int_V \vert f(x,v)\vert^2 e^{2s\varphi(x,0)} dvdx
&\le&
Ce^{2sr_0}\Vert f\Vert^2_{L^2(\Omega\times V)}
\\
&+&
Ce^{2sr_0}\int^T_0\int_{\Gamma_-} \vert \pp_tu\vert^2 dvdSdt
+ Ce^{Cs}d^2
\end{eqnarray*}
for all large $s>0$.  Using $\varphi(x,0) > r_1$ and choosing $s>0$ large,
we obtain
\begin{eqnarray*}
se^{2sr_1}\int_{\Omega}\int_V \vert f(x,v)\vert^2 dvdx
&\le&
Ce^{2sr_0}\Vert f\Vert^2_{L^2(\Omega\times V)}
\\
&+&
Ce^{Cs}\int^T_0\int_{\Gamma_-} \vert \pp_tu\vert^2 dvdSdt
+ Ce^{Cs}d^2.
\end{eqnarray*}
That is,
$$
\Vert f\Vert^2_{L^2(\Omega\times V)}
\le Ce^{-2s\mu}\Vert f\Vert^2_{L^2(\Omega\times V)}
+ Ce^{Cs}\int^T_0\int_{\pp\Omega}\int_V \vert \pp_tu\vert^2 dvdSdt,
$$
for all large $s > 0$.  Here we set $\mu := r_1 - r_0 > 0$.  
Choosing $s>0$ large, we can absorb the first term on the right-hand side
into the left-hand side, and complete the proof.
\qed

\begin{rmk}
\label{Remark4.1}
If we assume 
$\Vert \pp_tu\Vert_{L^2(\Omega \times V \times (0,T))} \le M$ 
with fixed constant $M>0$, the estimate in (4.6) is written as
$$
\int_Q \int_V \vert (\pp_t\chi)\pp_tu\vert^2 \weight dvdxdt\le Ce^{2sr_0}M^2.
$$
Then $f$ is estimated less sharply but more easily without using (2.3).  
We obtain
$$
\Vert f\Vert^2_{L^2(\Omega\times V)}\le CM^2e^{-2s\mu}+Ce^{Cs}d^2.
$$
By minimizing the right-hand side with respect to $s$, the H\"{o}lder 
stability is obtained.  
That is, there exist constants $\theta \in (0,1)$, $C>0$ and $T>0$ such that
$$
\Vert f\Vert_{L^2(\Omega\times V)}^2 \le C\left(\int^T_0\int_{\Gamma_+}
\vert \pp_tu\vert^2 dvdSdt\right)^{\theta}  \eqno{(1.15)}
$$
for all $f \in L^2(\Omega\times V)$.  The use of (2.3) is needed to 
obtain the Lipschitz stability.
\end{rmk}

%\ack

%\setcounter{section}{1}
%\appendix

\section*{References}

\end{document}